\begin{document}

\title{Hölder regularity for non-variational porous media type equations}

\author[H. A. Chang-Lara]{H\'ector A. Chang-Lara}
\address{Department of Mathematics, CIMAT, Guanajuato, Mexico}
\email{hector.chang@cimat.mx}

\author[M. S. Santos]{Makson S. Santos}
\address{Department of Mathematics, CIMAT, Guanajuato, Mexico}
\email{makson.santos@cimat.mx}

\begin{abstract}
We present a Krylov-Safonov theory approach for the Hölder regularity of viscosity solutions to non-variational porous media type equations.  We explore the peculiarity of this type of problem: either the equation falls in a uniformly elliptic regime or the eikonal mechanism takes care of the regularity.  Our techniques are based on sliding paraboloids resulting in an ABP-type measure estimate.  By combining such estimates,  a diminishing of oscillation property is available, resulting in a regularity control in Hölder spaces.

\end{abstract}

\subjclass{35B45, 35K55, 35K65, 76S05}

\keywords{Porous media, Krylov-Safonov theory, Hölder estimates, degenerate parabolic equations, ABP principle}

\maketitle

\section{Introduction}

Consider the evolution problem modeled by a continuous and non-negative function $u\in\cC(B_1\times(-1,0])$ and driven by an equation of the form
\begin{equation}\label{eq:main1}
\p_t u = \sum_{i,j=1}^n ua_{ij}\p_{ij}u + b_{ij}\p_iu\p_ju  \;\;\mbox{ in }\;\; B_1\times(-1,0]
\end{equation}
where $a_{ij}=a_{ij}(x),b_{ij}=b_{ij}(x) \in \R^{n\times n}$ are symmetric matrices that satisfy the uniform ellipticity hypothesis
\[
\sum_{i,j=1}^n a_{ij}\xi_i\xi_j,\sum_{i,j=1}^n b_{ij}\xi_i\xi_j \in[\l,\L]\ss(0,\8) \qquad \forall \, \xi \in\p B_1
\]
(denoted by $(a_{ij}),(b_{ij}) \in [\l,\L]\ss(0,\8)$ from now on). The degeneracy of the diffusion as $u$ goes to zero is certainly the most interesting feature of these equations from the regularity point of view. This is responsible of the finite speed of propagation for the support of the solution, and gives one of the fundamental examples in free boundary problems.

In this article we pursue the regularity theory for these type of equations. To illustrate the variety of challenges encountered, consider the solution $(e\cdot x+t)_+$ for some $e\in\R^n$ such that $\sum_{i,j=1}^n b_{ij}e_ie_j=1$ with $(b_{ij})$ constant. This traveling front exhibits two interesting behaviors: The impossibility of a Harnack inequality, and the fact that we should not expect solutions to be better than Lipschitz. Here is our main theorem.

\begin{theorem}
Given $[\l,\L]\ss(0,\8)$ there exist $\a \in (0,1)$ and $C>0$ such that the following holds: Let $(a_{ij}),(b_{ij}):B_1\times(-1,0] \to \R^{n\times n}$ symmetric and such that $(a_{ij}),(b_{ij}) \in [\l,\L]$, and $u \in {\mathcal C}(B_1\times(-1,0])$ be a solution of \eqref{eq:main1} taking values in $[0,1]$. Then 
\[
\|u\|_{{\mathcal C}^\a(B_{1/2}\times(-1/2,0])} \leq C.
\]
\end{theorem}

The classical example to keep in mind is the porous media equation (PME), historically developed from material sciences and fluid dynamics. Here $u$ is a density being transported by a potential flow and modeled by the continuity equation $\p_t u - \div(uDp) = 0$. The pressure $p$ gets related to the density $u$ by a constitutive relation which takes the form $p=\phi(u)$, in particular if $p=u$ we get the Boussinesq equation $\p_t u = u\D u + |Du|^2$.

Besides the classical motivation in fluid dynamics, the PME appears in other very interesting settings such as biological models \cite{Gurtin-1977,padron-2004} and differential geometry \cite{Wu-1993,MR1623198}. For a detailed exposition of the motivating problems we recommend the first chapters in the book by Vázquez \cite{vazquez-2007}.


Due to its diverse applications and fascinating nonlinear structure, the PME and its generalizations have attracted the attention of many authors over the years. Most of the developments account for distributional weak solutions, including unique solvability, regularity, finite speed of propagation, and asymptotic behavior. The regularity of solutions in several dimensions was established in 1979 by Caffarelli and Friedman \cite{Caffarelli_Friedman-1979,Caffarelli_Friedman-1980} based on apriori estimates due to Aronson and Bénilan \cite{MR524760} around the same time. The theory was then quickly extended to the two phase Stefan problem by Caffarelli and Evans \cite{MR683353} and very general singular equations in divergence form by Ziemer \cite{MR654859}, DiBenedetto \cite{DiBenedetto-1982,DiBenedetto-1983}, and Sacks \cite{MR696738}; all of them around 1982 and based on the de Giorgi-Nash-Moser approach. Besides Chapter 7 in the book of Vázquez \cite{vazquez-2007}, we also recommend the books by DiBenedetto, Urbano, and Vespri \cite{DiBenedetto_Urbano_Vespri-2004}, and Urbano \cite{urbano-2008} for a detailed and pedagogical analysis on the regularity theory of degenerate equations in divergence form.

Meanwhile the degeneracy under the variational structure is well understood, the non-variational counterpart still offers some open questions. In this case the natural setting for the existence, uniqueness, and stability theorems are the viscosity solutions developed also in the eighties and nineties. For the classical PME, this treatment was started by Caffarelli and Vázquez in 1996 \cite{Caffareli_Vazquez-1999} and extended by Brändle and Vázquez in 2005 \cite{Brandle_Vazquez-2005}.

This work continues this line of research by analyzing the regularity of solutions to PME type equations in non-divergence form. In this sense our methods belong to the Krylov-Safonov regularity theory, originally presented in a probabilistic setting in \cite{Krylov-1979,Krylov-1980}. The main distinction between the variational and non-variational approach is that the energy estimates are replaced by the Aleksandrov-Bakelman-Pucci (ABP) principle. Standard references are the book by Gilbarg and Trudinger \cite{MR1814364} or the one by Caffarelli and Cabré \cite{Caffarelli_Cabre-1995}.

Let us emphasize that because of the degeneracy, we cannot expect a Harnack's inequality to hold in this scenario. However, as $u$ approaches zero, the evolution is driven by an eikonal equation which controls the speed of propagation of the level sets. We like to think about it in the following terms, either the equation falls in a uniformly elliptic regime or the eikonal mechanism takes care of the regularity. This is also, in broad sense, the point of view taken for the regularity theory of the Stefan problem in \cite{MR683353}.

To make this idea precise, we take into account the scale invariance of the equation and pursue a diminish of oscillation lemma. For a solution $u$ taking values in $[0,1]$ we consider two alternative scenarios in measure: Either $\{u\leq 1/2\}$ covers a positive fraction of the domain or $\{u> 1/2\}$ takes most of it instead. The first case leads to an improvement of the oscillation from above and it is the easiest to handle; in this instance $\max(u,1/2)$ is a sub-solution driven by a uniformly elliptic operator. For the second case we show that $u$ becomes positive in a smaller cylinder (and from then on falls in a uniformly elliptic regime) by proving an ABP-type measure estimate adapting Mooney's clever approach in \cite{Mooney-2015} to degenerate parabolic equations.

In the preliminary Section \ref{sec:pre} we review the notion of viscosity solution, give a precise statement of our theorem, and finally state the lemmas that build our result. Section \ref{sec:abp} takes care of the improvement from below, meanwhile in Section \ref{sec:ia} we do the improvement from above. The main theorem is finally proved in Section \ref{sec:pf}. In the concluding Section \ref{sec:fin} we recapitulate the whole strategy and comment on some further extensions.

\textbf{Acknowledgment:} Both authors were supported by CONACyT-MEXICO Grant A1-S-48577.

\section{Preliminaries}\label{sec:pre}

%

\subsection{Notation}

The \emph{open ball} in $\mathbb{R}^n$ of radius $r>0$ centered at $x\in \R^n$ is denoted by $B_r(x) := \{y\in\R^n:|y-x|<r\}$ and by default $B_r:=B_r(0)$. The \emph{open cube} of length $l>0$ and centered at $x=(x_1,\ldots,x_n)\in\R^n$ is
\[
Q_l(x) := (x_1-l/2,x_1+l/2)\times\ldots\times(x_n-l/2,x_n+l/2), \qquad Q_l := Q_l(0).
\]

A set $D\ss \R^n\times \R$ is open with respect to the \emph{parabolic topology} if for every $(x_0,t_0) \in D$ there exists some $r>0$ such that $B_r(x_0)\times(t_0-r,t_0]\ss D$. We call any set $N\ss \R^n\times \R$ that contains a cylinder of the form $B_r(x_0)\times(t_0-r,t_0]$ (for some $r>0$) a \emph{parabolic neighborhood} of $(x_0,t_0)$.

For a measurable set $S\ss\R^n\times\R$ (or perhaps $S \ss \R^n$) we denote the Lebesgue measure by $|S|$. Occasionally we may also use this notation for the Hausdorff measure of some lower dimensional set, the corresponding dimension for the measure should be clear from the context.


Let $\a\in(0,1]$. A function $u:\W\times(t_i,t_f]\ss\R^n\times\R\rightarrow\mathbb{R}$ belongs to the \emph{H\"older space} $\mathcal{C}^{\a}(\W\times(t_i,t_f])$ if
\[
\|u\|_{\mathcal{C}^{\a}(\W\times(t_i,t_f])}:= \|u\|_{\mathcal C^0(\W\times(t_i,t_f])}+[u]_{\mathcal{C}^{\a}(\W\times(t_i,t_f])} <\8 \]
where
\[
\|u\|_{\mathcal C^0(\W\times(t_i,t_f])} := \sup_{\W\times(t_i,t_f]} |u|
\]
and
\[
[u]_{\mathcal{C}^{\a}(\W\times(t_i,t_f])}:= \sup_{\substack{(x,t),(y,s)\in \W\times(t_i,t_f] \\ (x,t)\neq(y,s)}}\frac{|u(x,t)-u(y,s)|}{(|x-y|+|t-s|)^{\a}}.
\]
We say that $u \in \mathcal C^\a_{loc}(\W\times(t_i,t_f])$ if $u \in \mathcal C^\a(B_r(x_0)\times(t_0-r,t_0])$ for any parabolic cylinder such that $B_{2r}(x_0)\times(t_0-2r,t_0] \ss \W\times(t_i,t_f]$.

Given $[\l,\L]\ss(0,\8)$ we define the Pucci's extremal operators acting on a symmetric matrix  $M = (M_{ij})\in\R^{n\times n}$ as
\begin{align*}
\cM_{\l,\L}^+(M) &:=\sup_{(a_{ij}) \in [\l,\L]}\sum_{i,j=1}^n a_{ij}M_{ij} =\sum_{e \in \eig (M)}(\L e_+ - \lambda e_-), \\
\cM_{\l,\L}^-(M) &:= \inf_{(a_{ij}) \in [\l,\L]}\sum_{i,j=1}^n a_{ij}M_{ij} =\sum_{e \in \eig (M)}(\l e_+ - \L e_-),
\end{align*}
where $\eig(M)$ is the set of eigenvalues of $M$ and $e_\pm = \max(\pm e,0)$ are the positive and negative parts of the real number $e$.

Notice that a function $u$ satisfies \eqref{eq:main1} for some coefficients $(a_{ij}),(b_{ij}) \in [\l,\L]$ if and only if
\begin{equation}\label{ineq}
u\cM_{\l,\L}^-(D^2u) + \l|Du|^2 \leq \p_t u \leq u\cM_{\l,\L}^+(D^2u) + \L|Du|^2.
\end{equation}

\begin{remark}\label{scaling}
These inequalities form a family of equations which is invariant under horizontal translations and the following scaling transformation: Let $\a\in \R$, if $u$ satisfies any of the two inequalities in \eqref{ineq} in a domain $\W\times(t_i,t_f]$ then $v(x,t) = r^{-\a}u(rx,r^{\b}t)$ also satisfies the corresponding inequality in $r^{-1}\W\times (r^{-\b}t_0,r^{-\b}t_f]$ provided that $\b=2-\a$.
\end{remark}


\subsection{Viscosity solutions}

In this section $F = F(M,p,z)$ will be a continuous function representing the fully-nonlinear operator driving the equation $\p_tu = F(D^2u,Du,u)$ over a domain $D\ss\R^n\times \R$, open in the parabolic sense. We should mainly keep in mind $F = |z|\cM_{\l,\L}^\pm M + b|p|^2$ (for $\l,\L,b\geq 0$), however other non-linearities will also be relevant (for example in the proof of Lemma \ref{lem:ia} in Section \ref{sec:ia}). Our goal is to define a weak notion of solution for this dynamic.


\begin{definition}
Let $D\ss\R^n\times \R$ a parabolic open set and $u \in \cC(D)$ be a non-negative function with $\p \spt u(\cdot,t) \cap D$ a smooth family of smooth surfaces, and $u$ is itself also smooth over $\spt u \cap D$. We say that $u$ is a classical sub-solution of
\[
\p_t u = F(D^2u,Du,u) \text{ in } D,
\]
if it satisfies in the classical point-wise sense
\[
\p_t u \leq F(D^2u,Du,u) \text{ in } \spt u \cap D.
\]
\end{definition}

We say that the sub-solution is \emph{strict} if the inequality is also strict at every point.

(Strict) classical super-solutions are defined in a similar way but with respect to the opposite inequalities.

The notion of viscosity solutions is given in terms of the \emph{comparison principle} that we should certainly expect for operators of the form $F(M,p,z) = |z|\cM_{\l,\L}^\pm M + b|p|^2$. To be precise, a (classical and non-negative) sub-solution $u$ that starts smaller than a (classical and non-negative) super-solution $v$ at some time, and never cross it over the lateral boundary of the domain, must remain smaller in the interior at future times. Otherwise we get a contradiction at the first contact point by using the comparison principle. Let us make these ideas rigorous with a couple definitions.

\begin{definition}
Let $(x_0,t_0)\in \R^{n}\times \R$ and $u$, $v$ be a pair of functions, continuously defined over some common parabolic neighborhood $N$ of $(x_0,t_0)$. We say that $v$ touches $u$ from above at $(x_0,t_0)$ over $N$, if $u\leq v$ with equality at $(x_0,t_0)$.
\end{definition}

Under the configurations of this definition, we may also say that $u$ touches $v$ from below.

\begin{definition}
Let $D\ss\R^n\times\R$ be a parabolic open set and $u \in \cC(D)$ a non-negative function. We say that $u$ is a viscosity sub-solution of 
\[
\p_t u = F(D^2u,Du,u)\text{ in } D,
\]
or that it satisfies
\[
\p_t u \leq F(D^2u,Du,u)\text{ in the viscosity sense in } D,
\]
if for every $(x_0,t_0)\in \spt u\cap D$, $u$ cannot be touched from above at $(x_0,t_0)$ by a strict classical super-solution of the same problem over a parabolic neighborhood of $(x_0,t_0)$ contained in $D$.
\end{definition}

The notion of \emph{viscosity super-solution} is given in a similar way by ruling out the contact from below with strict classical sub-solutions. Finally, \emph{viscosity solutions} are those which are simultaneously viscosity sub and super-solutions with respect to the same operator.

Most of the time our domains are just open cylinders for the parabolic topology, however in the proof of Lemma \ref{lem:abp} we also consider solutions in arbitrary open sets (see the domain for the equation \eqref{eq:ellip}).

This definition is consistent with the notion of classical solutions under the following ellipticity assumptions for $F:\mathbb R^{n\times n}\times \R^n\times \R\to \R$:

\textbf{(H1)} For any pair of symmetric matrices $M_1, M_2 \in \R^{n\times n}$, $p\in \R^n$ and $z\in \R$
\[
M_1\leq M_2 \qquad \Rightarrow\qquad F(M_1,p,z)\leq F(M_2,p,z).
\]

\textbf{(H2)} For any pair of symmetric matrices $M_1, M_2 \in \R^{n\times n}$ and $p_1,p_2\in \R^n$
\[
|p_1|\leq |p_2| \qquad \Rightarrow\qquad F(M_1,p_1,0)\leq F(M_2,p_2,0).
\]

In particular, notice that $F(M,p,z) = |z|\cM^\pm_{\l,\L}M+ b|p|^2$ with $\l,\L,b\geq 0$, satisfies (H1) and (H2) above.

We have the following consistency property between viscosity and classical solutions of elliptic problems.

\begin{property}
Let $D\ss\R^n\times \R$ be a parabolic open set, $F:\R^{n\times n}\times \R^n\times \R\to \R$ satisfy (H1) and (H2), and $u\in \mathcal C(D)$ be a non-negative function with $\p \spt u(\cdot,t) \cap D$ a smooth family of smooth surfaces, and $u$ is itself also smooth over $\spt u \cap D$. The following two are equivalent:
\begin{enumerate}
\item $\p_t u \leq F(D^2u,Du,u)$ in the classical sense in $D$.
\item $\p_t u \leq F(D^2u,Du,u)$ in the viscosity sense in $D$.
\end{enumerate}
\end{property}

\begin{proof}
$(1)\Rightarrow(2)$: Assume by contradiction that $u$ is a classical sub-solution and $\varphi$ is a strict classical super-solution that touches $u$ from above at $(x_0,t_0) \in \spt u\cap D$ over $B_r(x_0)\times(t_0-r,t_0] \ss D$. If $u(x_0,t_0)> 0$ we use the first and second derivative test together with (H1) to get that
\begin{align*}
\p_t \varphi(x_0,t_0) &\leq \p_t u(x_0,t_0),\\
&\leq F(D^2u(x_0,t_0),Du(x_0,t_0),u(x_0,t_0)),\\
&\leq  F(D^2\varphi(x_0,t_0),D\varphi(x_0,t_0),\varphi(x_0,t_0)).
\end{align*}
This contradicts $\varphi$ being a strict super-solution.

If instead $u(x_0,t_0)=0$, we use that for the positive parts of the functions $|D\varphi_+(x_0,t_0)|\geq |Du_+(x_0,t_0)|$  instead such that by (H2) we also get a contradiction.

$(2)\Rightarrow(1)$: Assume by contradiction that $u$ is a viscosity sub-solution and for some $(x_0,t_0) \in \spt u\cap D$ we have instead that
\[
\p_t u(x_0,t_0) > F(D^2u(x_0,t_0),Du(x_0,t_0),u(x_0,t_0)).
\]
By continuity we also have that the same inequality holds over $\spt u\cap N$ for some parabolic neighborhood $N:=B_r(x_0)\times (t_0-r,t_0] \ss D$. Then $\varphi=u$ defined over $N$ is a strict classical super-solution that touches $u$ and gives a contradiction.
\end{proof}

%

The existence, uniqueness and stability of viscosity solutions to free boundary problems of the form $\p_tu = F(D^2u,Du,u)$ is a delicate issue that we will not pursue in this article. Whenever we invoke the comparison principle at least one of the solutions will be classical, so the conclusions just follow from the definitions. A careful analysis of the well-possedness by the viscosity approach has been studied for instance in \cite{Caffareli_Vazquez-1999,Brandle_Vazquez-2005} for equations of the form
\[
\p_t u = a(u)\D u+|Du|^2
\]
with even weaker assumption on the diffusion $a$ than we have (Brändle and Vázquez allow $a$ to be sub-linear as $u\to0^+$). Here we would like to highlight as well a closely related line of research for several free boundary problems such as Hele-Shaw and Stefan \cite{MR2763347}, Richards equation \cite{MR3116010}, or the PME with drift \cite{MR2600689}.


\subsection{Main result and overview of the strategy}

Here is a precise and rigorous version of the theorem announced in the introduction.

\begin{theorem}\label{thm}
Given $[\l,\L]\ss(0,\8)$ there exists $\a \in (0,1)$ such that the following hold: Let $D \ss \R^n\times\R$ be a parabolic open set and $u \in {\mathcal C}(D)$ a non-negative function that satisfies the following inequalities in the viscosity sense
\[
\begin{cases}
\p_t u \geq u\cM_{\l,\L}^-(D^2u) + \l|Du|^2 \text{ in } D\\
\p_t u \leq u\cM_{\l,\L}^+(D^2u) + \L|Du|^2 \text{ in } D
\end{cases}
\]
then $u \in \cC^\a_{loc}(D)$.
\end{theorem}

We will present a detailed proof of this theorem in the Section \ref{sec:pf}. Here is a sketch of the main steps: By the translation and scale invariance of the problem (Remark \ref{scaling}) it suffices to establish a decay of oscillation for the solutions over parabolic cylinders, this is provided by the next two lemmas. We consider two possible alternatives for $u$ in measure in order to discriminate in which direction does the oscillation improve. First we state the most delicate case, whether $u$ is a non-negative super-solution that is larger than $1/2$ in $B_1\times(-1,0]$ over a sufficiently large fraction of the cylinder. In this case the oscillation improves from below and then the equation becomes uniformly parabolic. The following lemma is proven in Section \ref{sec:abp}.

\begin{lemma}[Improvement from below] \label{lem:ib}
Given $[\l,\L]\ss(0,\8)$ there exist $\eta,\theta \in (0,1)$ such that if $u \in {\mathcal C}(B_1\times(-1,0])$ is a non-negative function that satisfies
\[
\p_t u \geq u\cM_{\l,\L}^-(D^2u) + \l|Du|^2 \text{ in the viscosity sense in } B_1\times(-1,0],
\]
then
\[
\frac{|\{u>1/2\}\cap B_1\times(-1,0]|}{|B_1\times(-1,0]|} \geq (1-\eta) \qquad \Rightarrow\qquad \inf_{B_{1/2}\times(-1/2,0]} u\geq \theta.
\]
\end{lemma}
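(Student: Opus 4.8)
The plan is to run the classical Krylov--Safonov/De Giorgi "measure-to-pointwise" mechanism, but taking advantage of the fact that on the region where $u$ is already bounded below the equation is uniformly parabolic, so that standard linear theory applies. First I would argue that the hypothesis forces $u$ to be close to $1$ somewhere deep in the cylinder: since $|\{u>1/2\}|\geq(1-\eta)|B_1\times(-1,0]|$ with $\eta$ small, the complement $\{u\leq 1/2\}$ is a small-measure set, so there must exist a point $(x_0,t_0)$ with $t_0$ not too close to $-1$ and $x_0$ not too close to $\partial B_1$ at which $u(x_0,t_0)$ is at least, say, $1/2$; more importantly, on a sub-cylinder most of the mass of $u$ lies above $1/2$. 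The natural first reduction is therefore to prove: if $u\geq 0$ is a super-solution in $B_1\times(-1,0]$, takes values in $[0,1]$, and $|\{u>1/2\}\cap B_1\times(-1,0]|\geq(1-\eta)|B_1\times(-1,0]|$, then $\inf_{B_{3/4}\times(-3/4,0]}u>0$ (with a quantitative lower bound), and then bootstrap in a uniformly parabolic regime to reach $B_{1/2}\times(-1/2,0]$.

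The core step is a weak Harnack / measure-propagation estimate for the super-solution inequality $\p_t u \geq u\cM^-(D^2u)+\l|Du|^2$. On the set where $u\geq c_0>0$ one has $u\cM^-(D^2u)\geq c_0\cM^-(D^2u)$ and the gradient term is nonnegative, so $w:=u$ satisfies there $\p_t w \geq c_0\cM^-(D^2w)$, a uniformly parabolic Pucci inequality with ellipticity constants $c_0\l, c_0\L$. The difficulty is that this is only true where $u$ is already positive — exactly what we are trying to establish. To break this circularity I would proceed in steps on a decreasing family of cylinders: use that $u>1/2$ on a set of measure at least $(1-\eta)|B_1\times(-1,0]|$ to find, by a covering/Chebyshev argument, a full-measure-fraction of time slices on which $\{u>1/2\}$ occupies most of $B_1$; on such slices build a smooth classical strict sub-solution barrier (a small bump, using solutions like $(e\cdot x+t)_+$ scaled down, or a radial barrier of the form $\e(1-|x|^2/\rho^2 - K(t-s))_+$) that lies below $u$ initially and whose support cannot be crossed by $u$ from the side by the comparison principle, forcing $u$ to stay above a small positive constant on a slightly smaller cylinder at later times. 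This is the degenerate analogue of the standard "point-to-measure" step, and it is where Mooney's ABP-type idea enters: rather than a Harnack chain we use a barrier whose support is advected by the eikonal term, and the comparison principle keeps $u$ above it.

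The main obstacle is precisely this barrier construction and the bookkeeping of constants: the super-solution inequality is genuinely degenerate, so one cannot simply invoke the linear weak Harnack inequality; the barrier must be an honest classical strict sub-solution of $\p_t u \leq u\cM^-(D^2u)+\l|Du|^2$ on its support, with smooth free boundary, and its height $\e$, the geometry $\rho$, and the speed $K$ must be chosen (depending only on $\l,\L,n$, and on $\eta$ through the measure lower bound) so that: (i) it sits below $u$ at the initial time on the chosen good slice, (ii) it never exceeds $u$ on the lateral boundary, and (iii) after running the comparison it yields $u\geq\theta$ on $B_{1/2}\times(-1/2,0]$. Once $u\geq\theta>0$ on that cylinder we are done; alternatively, if one only gets positivity on an intermediate cylinder, one more application of the linear parabolic Harnack inequality (now legitimate, since the equation is uniformly parabolic where $u\geq\theta$) upgrades it to a uniform lower bound on $B_{1/2}\times(-1/2,0]$. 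I expect the measure-covering step to be routine, the uniformly-parabolic bootstrap to be standard, and the degenerate barrier/comparison step to carry essentially all of the difficulty.
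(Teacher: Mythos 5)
There is a genuine gap at the heart of your plan: the step where you place a barrier below $u$ ``initially on the chosen good slice'' cannot be carried out with only the measure information available. Knowing that $\{u>1/2\}$ fills a $(1-\delta)$-fraction of a time slice tells you nothing about \emph{where} the exceptional set lies; $u$ may vanish on a small-measure set sitting exactly inside the support of your bump $\epsilon(1-|x|^2/\rho^2-K(t-s))_+$, in which case the comparison principle cannot even be started. (Nor can you propagate from the single point where $u\geq 1/2$ that you locate at the outset: the paper's traveling front $(e\cdot x+t)_+$ shows Harnack fails for this equation.) Converting measure information into a pointwise bound is precisely the content of the weak Harnack/$L^\epsilon$ estimate, and even in the uniformly parabolic case it is not obtained from a single barrier: one proves the contrapositive point-to-measure statement (if $u$ is small at one point, then $\{u\leq M\}$ has measure bounded below) by an ABP argument coupled with a covering/selection scheme. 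That is exactly what the paper does: Lemma \ref{lem:ib} is deduced from Lemma \ref{lem:ib2}, whose proof occupies all of Section \ref{sec:abp} --- a family of concave paraboloids is slid from below, the measure of the contact set is bounded via the area formula (Lemma \ref{lem:abp}), and when contact occurs in the degenerate regime $\{u\leq m\}$ the eikonal term is used to \emph{localize the contact set} (not to advect a barrier), feeding a dyadic selection algorithm in the style of Mooney; a final Vitali argument integrates the resulting fixed-time estimates over a time interval. None of this machinery appears in your proposal, and the barrier comparison you propose in its place does not close the argument.

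A secondary point: the reduction ``$u\cM^-(D^2u)\geq c_0\cM^-(D^2u)$ on $\{u\geq c_0\}$'' is false when $\cM^-(D^2u)<0$. The correct statement, which the paper uses inside the proof of Lemma \ref{lem:abp}, is that on $\{u\in(c_0,1]\}$ the super-solution inequality yields $\p_t u\geq\bar\cM^-(D^2u)$ for a Pucci operator with the enlarged ellipticity interval $[c_0\l,\L]$. This does not affect the main objection, but it should be corrected in any rewrite.
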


Once fixed the fraction $\eta \in (0,1)$ by the previous lemma we can consider the alternative scenario, namely $u$ being less than $1/2$ in $B_1\times(-1,0]$ at least a small fraction of the cylinder. Section \ref{sec:ia} contains the proof of the following result.

\begin{lemma}[Improvement from above]\label{lem:ia}
Given $[\l,\L]\ss(0,\8)$ and $\eta>0$, there exists $\theta \in (0,1)$ such that if $u \in {\mathcal C}(B_1\times(-1,0])$ takes values in $[0,1]$ and satisfies
\[
\p_t u \leq u\cM_{\l,\L}^+(D^2u) + \L|Du|^2 \text{ in the viscosity sense in } B_1\times(-1,0],
\]
then
\[
\frac{|\{u\leq 1/2\}\cap B_1\times(-1,0]|}{|B_1\times(-1,0]| } \geq \eta\qquad \Rightarrow\qquad \sup_{B_{1/2}\times(-1/2,0]} u\leq (1-\theta). 
\]
\end{lemma}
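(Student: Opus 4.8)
The plan is to exploit the fact that the sub-solution inequality for $u$ becomes a \emph{uniformly parabolic} inequality once we stay away from the degeneracy. Since $u$ takes values in $[0,1]$, on the region where $u \geq 1/2$ the coefficient $u$ in front of $\cM^+(D^2u)$ is comparable to a constant, so $u$ is a sub-solution of a genuine uniformly parabolic Pucci extremal inequality there. The trick to globalize this is to pass to the function $w := 1 - u \in [0,1]$, which is then a \emph{super-solution} of a uniformly parabolic equation (with a gradient term of the wrong sign, but a favorable one) on the set $\{w \leq 1/2\} = \{u \geq 1/2\}$. More precisely, from $\p_t u \leq u\cM^+(D^2 u) + \L|Du|^2$ we get $\p_t w \geq u\cM^-(D^2 w) - \L|Dw|^2 \geq \tfrac12 \cM^-(D^2 w) - \L|Dw|^2$ wherever $u \geq 1/2$. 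The hypothesis $|\{u \leq 1/2\}| \geq \eta|B_1\times(-1,0]|$ says exactly that $w \geq 1/2$ on a set of measure at least $\eta$.

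The heart of the argument is then a measure-to-pointwise estimate of weak Harnack / De Giorgi type for uniformly parabolic super-solutions: a non-negative super-solution of a uniformly parabolic Pucci inequality that is bounded below by a fixed constant on a set of definite measure must be bounded below by a (smaller) fixed constant on a smaller cylinder. One has to be careful because $w$ only satisfies the super-solution inequality on $\{u\geq 1/2\}$, not everywhere. The standard way around this is a truncation: work with $\bar w := \min(w, 1/2)$, which is a non-negative viscosity super-solution of $\p_t \bar w \geq \tfrac12\cM^-(D^2\bar w) - \L|D\bar w|^2$ on all of $B_1\times(-1,0]$ (the truncation is harmless for super-solutions since taking a minimum with a constant preserves the super-solution property, and on $\{w > 1/2\}$ the function $\bar w$ is constant, hence trivially a classical super-solution). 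Now $\bar w \geq 0$, $\bar w \in [0,1/2]$, and $|\{\bar w \geq 1/2\} \cap B_1\times(-1,0]| = |\{u \leq 1/2\}\cap B_1\times(-1,0]| \geq \eta|B_1\times(-1,0]|$. Applying the weak Harnack inequality for uniformly parabolic extremal operators (the parabolic Krylov--Safonov estimate, as in Cabr\'e--Caffarelli or Wang's work, adapted to allow the lower-order $|D\bar w|^2$ term, which is permissible since $\bar w$ is bounded so the gradient term can be absorbed) yields a constant $c = c(\l,\L,n,\eta) \in (0,1/2)$ with $\bar w \geq c$ on $B_{1/2}\times(-1/2,0]$. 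Translating back, $u \leq 1 - c$ on $B_{1/2}\times(-1/2,0]$, which is the conclusion with $\theta := c$.

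The main obstacle I anticipate is the parabolic geometry of the weak Harnack inequality: for super-solutions the natural statement controls the infimum over a cylinder by the measure of a super-level set over a cylinder that sits \emph{strictly in the past}, because of the one-sided (backward-in-time) nature of the parabolic operator. The set $\{u \leq 1/2\}$ is assumed to have positive measure in \emph{all} of $B_1\times(-1,0]$, but a priori it could be concentrated near the final time slice $t = 0$, where the weak Harnack estimate gives no information at $t=0$. Resolving this requires either iterating the estimate across a finite chain of overlapping cylinders marching forward in time (a standard but slightly delicate covering argument, using that at each step the super-solution property propagates the positivity forward), or invoking a version of the parabolic weak Harnack inequality stated directly for the full cylinder. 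A secondary technical point is verifying that the truncated function $\bar w$ is genuinely a viscosity super-solution in the sense defined in the paper — this follows because the paper's definition only tests against strict classical sub-solutions, a contact from below with $\bar w$ forces a contact from below with $w$ at a point where either $w < 1/2$ (so the inequality for $w$ applies) or $w = 1/2$ is a strict local minimum of $w - (\text{classical sub-solution})$, in which case the constant $1/2$ already gives the contradiction.
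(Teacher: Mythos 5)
Your proposal is correct and follows essentially the same route as the paper's proof: truncate at the level $1/2$ to remove the degeneracy (the paper uses $w=\max(1/2,u)$, you use the equivalent $\min(1-u,1/2)$), note that the truncation satisfies a uniformly parabolic Pucci inequality with ellipticity $[\l/2,\L]$, and conclude with the weak Harnack inequality applied to the complementary non-negative supersolution. The one step you assert rather than carry out --- accommodating the natural-growth term $\L|Dw|^2$ inside the weak Harnack inequality --- is exactly what the paper does explicitly via the exponential substitution $v=e^{Aw}$, which for $A$ large cancels the gradient term and reduces matters to $\p_t v\leq \bar\cM^+(D^2v)$; your remark that boundedness of the truncation is what makes this permissible identifies the right reason, and that substitution is the standard way to make it rigorous.
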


\section{Improvement from below}\label{sec:abp}

In this section we prove a contrapositive version of Lemma \ref{lem:ib} in a convenient geometric configuration. At the end of this section we will see how to obtain Lemma \ref{lem:ib} from this result by a scaling and covering argument.

\begin{lemma}[Measure estimate]\label{lem:ib2}
Given $[\l,\L]\ss(0,\8)$ there exist $M,\eta >0$ such that if $u \in \cC(B_{10}\times(-1,1])$ is a non-negative functions that satisfies
\[
\p_t u \geq u\cM_{\l,\L}^-(D^2u) + \l|Du|^2 \text{ in the viscosity sense in } B_{10}\times(-1,1],
\]
then
\[
u(0,1) \leq 1 \qquad\Rightarrow\qquad |\{u\leq M\}\cap B_{10}\times(-1,1]| \geq \eta.
\]
\end{lemma}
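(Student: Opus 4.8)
The plan is to prove this Aleksandrov--Bakelman--Pucci type measure lemma — the degenerate parabolic analogue of the first Krylov--Safonov measure estimate — by adapting Mooney's sliding paraboloid argument from \cite{Mooney-2015}. Read contrapositively it says that if $u$ stays large on most of $B_R\times[-T,0]$ then it cannot drop to the value $1$ at $(0,1)$; the quantitative device is a contact set, produced by sliding barriers up from below, which must lie inside $\{u\le M\}\cap B_R\times[-T,0]$ and have measure bounded below by a constant.

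I would fix opening constants $C_0,C_1$ depending only on $n,\l,\L$, with $C_0$ small, and for each space--time vertex $(y,s)$ ranging over a fixed box (with $|y|$ up to a multiple of $R$ and $s\le -T$) consider the paraboloids $P_{y,s}(x,t)=c-C_0|x-y|^2-C_1(t-s)$. Sliding the height $c$ up from $-\infty$ until $P_{y,s}^+:=\max(P_{y,s},0)$ first touches $u$ from below in $\overline{B_R}\times[-T,1]$, the supremal height is $c^*(y,s)=\min_{\overline{B_R}\times[-T,1]}\big(u(x,t)+C_0|x-y|^2+C_1(t-s)\big)$; let $(x_0,t_0)=(x_0(y,s),t_0(y,s))$ be a minimizing point and $A$ the set of all such contact points. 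Evaluating $c^*$ at $(0,1)$ and using $u(0,1)\le1$ gives $u(x_0,t_0)\le 1+C_0(|y|^2-|x_0-y|^2)+C_1(1-t_0)\le M$ for a constant $M=M(n,\l,\L)$, so $A\subseteq\{u\le M\}$.

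Since $P_{y,s}^+$ is a classical candidate — smooth on its support $\{P_{y,s}\ge0\}$, whose boundary is smooth because $\p_tP_{y,s}=-C_1\ne0$ — touching the viscosity super-solution $u$ from below at an interior point, it cannot be a strict classical sub-solution near $(x_0,t_0)$; writing out the sub-solution inequality with coefficient $u(x_0,t_0)=P_{y,s}(x_0,t_0)$ forces
\[
-C_1\ \ge\ u(x_0,t_0)\,\cM^-(-2C_0I)+\l|{-2C_0(x_0-y)}|^2\ =\ -2n\L C_0\,u(x_0,t_0)+4\l C_0^2|x_0-y|^2,
\]
hence $u(x_0,t_0)\ge C_1/(2n\L C_0)=:c_1>0$ and $|x_0-y|$ is bounded. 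Thus on $A$ the diffusion coefficient $u$ is pinched in $[c_1,M]$, i.e.\ the equation is effectively uniformly parabolic there — precisely the regime the paper advertises — and now a parabolic area formula closes the argument: the touching height $c^*$ is (up to sign) an inf-convolution of $u$, hence semiconcave and twice differentiable a.e.\ by Aleksandrov's theorem, with $D_yc^*=-2C_0(x_0-y)$ and $\p_sc^*=C_1$ at points of differentiability; the differential inequality on $u$ transfers to $c^*$, and together with the semiconcavity bound and $c_1\le u\le M$ it confines $D^2_yc^*$ and $\p_sc^*$ to a compact set over the image of $A$, with constants depending only on $n,\l,\L$. The change of variables then yields, for the vertex box large enough,
\[
\eta_0(n,\l,\L)\ \le\ \big|\{D_yc^*(y,s)\}\big|\ \le\ \int_{A}\big|\det D^2_yc^*\big|\,|\p_sc^*|\ \le\ C(n,\l,\L)\,|A|,
\]
so $|A|\ge\eta_0/C>0$, and combining with the localization of $A$ gives $|\{u\le M\}\cap B_R\times[-T,0]|\ge\eta>0$.

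The main obstacle I expect is the localization of the contact set — ensuring every minimizer $(x_0,t_0)$ is interior, stays clear of $\partial B_R\times[-T,1]$ and $B_R\times\{-T\}$, and satisfies $t_0\le0$. Because $u$ is merely non-negative, with no a priori upper bound in the past, one cannot simply read off where $u$ is smallest relative to the paraboloid, so the opening $C_0$ (itself constrained through $T=2n$, since $P_{y,s}^+$ must be a genuine strict classical sub-solution away from contact, where the degenerate coefficient only helps when $C_0$ is small), the slope $C_1$, the radius $R=5\sqrt n$, and the range of vertices must all be balanced so that the minimum defining $c^*$ is interior and attained in the lower slab. This is exactly where the explicit values $R=5\sqrt n$ and $T=2n$ are spent. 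The remaining point — transferring the differential inequality to the inf-convolution for a viscosity rather than classical solution — is by now standard ABP machinery, once the earlier steps have confined everything to the non-degenerate regime $u\ge c_1>0$.
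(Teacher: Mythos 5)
Your reduction of the lemma to a single sliding-paraboloid pass has a fatal gap, and it is exactly the gap that forces the paper to spend all of Section \ref{sec:abp} on an iterative argument. The crux of your proposal is the claim that every contact point automatically satisfies $u(x_0,t_0)\ge c_1>0$, so that the whole contact set lies in the uniformly parabolic regime and the area formula applies. That inequality is derived from the viscosity super-solution condition \emph{at an interior contact point}, but with your paraboloids $P_{y,s}=c-C_0|x-y|^2-C_1(t-s)$, $C_1>0$ (decreasing in time), interior contact generically fails. Since $c^*(y,s)=\min\big(u+C_0|x-y|^2+C_1(t-s)\big)$ and the time-coefficient $C_1$ is positive wherever $u=0$, the minimum is pushed to the initial slice $t=-T$ whenever $u$ vanishes on a sizable early portion of the cylinder. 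The traveling front $u(x,t)=(e\cdot x+t)_+$ with $|e|$ small is admissible here ($u(0,1)=1$, $u(\cdot,-2n)\equiv 0$ on $B_R$) and sends \emph{every} minimizer to $B_R\times\{-T\}$, where no differential inequality holds; no balancing of $C_0,C_1,R,T$ or of the vertex box can prevent this, because the degenerate equation genuinely allows $u\equiv 0$ at early times. So the values $R=5\sqrt n$, $T=2n$ are not "spent" on making your minimum interior --- in the paper they guarantee that the refined paraboloids of the next dyadic generation make contact inside the domain.

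If you flip the sign and use paraboloids increasing in time (as the paper does, $P^{\a,\b}_{y,s}=-\tfrac{\a}{2}|x-y|^2+\b(t-s)$ with $\b>0$), the first-crossing argument does give interior contact, but then the touching inequality reads $\b\ge -2n\L C_0\,u(x_0,t_0)+\l|DP|^2$, which is perfectly consistent with $u(x_0,t_0)=0$: it only yields the localization $|x_0-y|\le C$ via the eikonal term, not positivity of $u$. And at a degenerate contact point the equation gives no upper bound on $D^2u$, so the Jacobian $\det(I+\tfrac{1}{2\a}D^2u)$ is uncontrolled and the area formula fails. This dichotomy --- either the contact is in $\{u>m\}$ and the ABP/area-formula step works (that is Lemma \ref{lem:abp}), or it is in $\{u\le m\}$ and one only gets localization --- is why the paper iterates: the eikonal localization confines the degenerate contact set to $B_{1/4}\times(0,1]$, one descends to the $2^n$ sub-cubes with doubled spatial opening (Corollary \ref{cor:abp}), and in the worst case the output is only a measure estimate in space at the single time $t=-2n$ (Corollary \ref{cor:fxtm}), which must then be integrated over a range of times with Vitali's lemma (Corollary \ref{cor:ib3}) to produce a space-time set. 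Your proposal in effect assumes the hypothesis of Lemma \ref{lem:abp} ($A^{\a}(0,0)\cap\{u>m\}\neq\emptyset$) always holds, which is false, and so it proves only the easy half of the lemma.
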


The technique to establish this type of measure estimates in the uniformly elliptic or parabolic regime is known as the ABP Lemma. It consists on showing that there is a set of positive measure where $u$ can be touched by a family of paraboloids.

\subsection{Paraboloids and contact sets}

A concave \emph{paraboloid} centered at the \emph{vertex} $(x_0,t_0) \in\R^n\times\R$, of opening $\a > 0$ in space, and slope $\b>0$ in time, is the quadratic function
\[
P^{\a,\b}_{x_0,t_0}(x,t) = P^{\a,\b}_{(x_0,t_0)}(x,t) := -\tfrac{\a}{2}|x-x_0|^2+\b(t-t_0)
\]

\begin{remark}\label{scaling2} The paraboloids are related by the Lipschitz (in space and time) scaling transformation 
\[
P^{r\a,\b}_{x_0,t_0}(x,t) = r^{-1} P^{\a,\b}_{rx_0,rt_0}(rx,rt)
\]
Keep in mind that by Remark \ref{scaling}, this Lipschitz transformation also leaves invariant the equations under consideration. 
\end{remark}


We define the \emph{contact set} of $u \in \cC(\W\times(t,s])$ with respect to a set of vertices $V\ss \R^n\times \R$ as
\begin{align*}
A^{\a,\b}_V(\W,t,s) := \{(x^*,t^*) \in \W\times (t,s] \ : \ &\text{$\exists \, (x_0,t_0)\in V$ such that $P_{x_0,t_0}^{\a,\b}$ touches $u$}\\ &\text{from below at $(x^*,t^*)$ over $\W\times(t,t^*]$}\}
\end{align*}
We may omit the dependence on $\W$, $t$, or $s$, whenever these are clear from the context. We also denote $A^{\a,\b}_{x_0,t_0} := A^{\a,\b}_{\{(x_0,t_0)\}}$.

\subsection{Aleksandrov-Bakelman-Pucci-type lemma}




\begin{lemma}[ABP]\label{lem:abp}
Given $[\l,\L] \ss (0,\8)$, $\a>1$, and $m\in(0,1)$, there exist $M,\eta>0$ such that the following holds: Let $\t \in (0,3]$ and $u \in \cC(B_{10} \times(0,\t])$ be a non-negative function that satisfies
\[
\p_t u \geq u\cM_{\l,\L}^-(D^2u) \text{ in the viscosity sense in } B_{10} \times(0,\t],
\]
then
\[
A^{\a,\a}_{0,0}(B_{10},0,\t) \cap \{u > m\} \neq \emptyset \qquad\Rightarrow\qquad |\{u\leq M\}\cap B_{10}\times(0,\t]| \geq \eta.
\]
\end{lemma}

\begin{proof}
Assuming that
\[
(x^*_0,t_0^*) \in A^{\a,\a}_{0,0}(0,\t)\cap \{u > m\} \text{ exists,}
\]
we will construct a set $V\ss\R^n\times[0,\8)$ of vertices such that for each $(x_0,t_0) \in V$
\[
\emptyset \neq A^{2\a,2\a}_{x_0,t_0}(t_0^*-r^2,t_0^*) \ss \{u \in [m/2,M]\},
\]
where
\[
r := m/(100\a) \in (0,1), \qquad M:= 6\a.
\]

Given that in the time interval $(0,\t]$, any paraboloid $P^{2\a,2\a}_{x_0,t_0}$ (with $t_0\geq 0$) is less than or equal to $2\a\tau \leq 6\a$, we get for free that $A^{2\a,2\a}_{V}(t_0^*-r^2,t_0^*) \ss \{u\leq M\}$. The proof will be finished once we get a lower bound on the measure of the contact set $A^{2\a,2\a}_{V}(t_0^*-r^2,t_0^*)$, which will be a consequence of the uniform ellipticity of the equation over $\{u \in [m/2,M]\}$.

Before going into the construction of $V$, let us point out a few facts about the contact point $(x_0^*,t_0^*)$. Because $t^*_0\leq \t\leq 3$ we have that
\[
x_0^* \in \overline{\{P_{0,0}^{\a,\a}(\cdot,t_0^*)>0\}} \ss \overline{\{P_{0,0}^{\a,\a}(\cdot,3)>0\}} = \overline{B_{\sqrt 6}}.
\]
Also
\[
\a t_0^* \geq P_{0,0}^{\a,\a}(x_0^*,t_0^*) > m \geq \a r^2,
\]
which gives us that $(t_0^*-r^2,t_0^*]\ss(0,\t]$. Putting these two together and recalling that $r\in(0,1)$ we observe that
\[
N := B_{4r}(x^*_0)\times (t_0^*-r^2,t_0^*] \ss B_{7}\times(0,\t].
\]

\begin{figure}
    \centering
    \includegraphics[scale=.7]{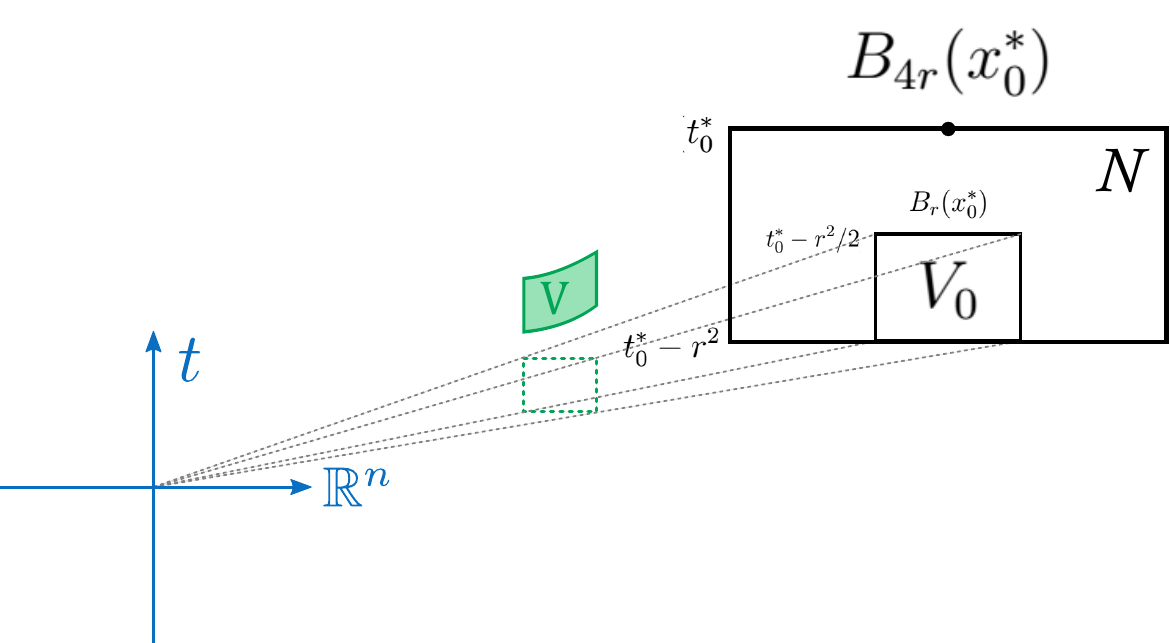}
    \caption{Configuration for the vertices of the paraboloids around the contact point $(x_0^*,t_0^*)$}
    \label{fig2}
\end{figure}

Let
\[
V := \Phi(V_0), \qquad \Phi(x,t) := (x/2, t/2+|x|^2/8), \qquad V_0 := B_{r}(x^*_0)\times (t_0^*-r^2,t_0^*-r^2/2],
\]
see Figure \ref{fig2}. The idea is that for any $(x_0,t_0) \in V_0$ we have that
\[
P^{\a,\a}_{0,0} + P^{\a,\a}_{x_0,t_0} = P^{2\a,2\a}_{\Phi(x_0,t_0)}.
\]
Notice that for $t\in (0,\t]$ we still have the compact inclusion of the supports of the (positive parts) of the paraboloids
\begin{align}\label{eq:sop}
\overline{\{ P^{2\a,2\a}_{\Phi(x_0,t_0)}(\cdot,t)>0\}} \ss \overline{\{(P^{\a,\a}_{0,0} + P^{\a,\a}_{x_0,t_0})(\cdot,3)>0\}} \ss \overline{B_{2\sqrt{6}+r}} \ss B_7.
\end{align}

Let us show first that for $(x_0,t_0) \in V_0$
\[
\emptyset \underset{\textbf{(1)}}{\neq} A^{2\a,2\a}_{\Phi(x_0,t_0)}(t_0^*-r^2,t_0^*) \underset{\textbf{(2)}}{\ss} N \underset{\textbf{(3)}}{\ss} \{u\geq m/2\}.
\]

\textbf{(1)} On one hand, $P^{2\a,2\a}_{\Phi(x_0,t_0)} = P^{\a,\a}_{0,0} + P^{\a,\a}_{x_0,t_0} < u$ over the time interval $(0,t_0^*-r^2]$ because $P^{\a,\a}_{0,0}\leq u$ and $P^{\a,\a}_{x_0,t_0}<0$ over the same interval. On the other hand, 
\[
P^{\a,\a}_{x_0,t_0}(x^*_0,t_0^*) = -\tfrac{\a}{2}|x^*_0-x_0|^2 + \a(t_0^*-t_0) \geq -\tfrac{\a}{2}r^2 + \a \tfrac{r^2}{2} = 0.
\]
This means that $P^{2\a,2\a}_{\Phi(x_0,t_0)}(x^*_0,t_0^*) \geq u(x^*_0,t_0^*)$, so we have shown that the paraboloid $P^{2 \a,2\a}_{\Phi(x_0,t_0)}$ reaches $u$ at some point in the time interval $(t_0^*-r^2,t_0^*]$ (and necessarily in $B_{10}$ by \eqref{eq:sop}), i.e.
\[
\emptyset \neq A^{2\a,2\a}_{\Phi(x_0,t_0)}(t_0^*-r^2,t_0^*).
\]

\textbf{(2)} For $(x,t) \in (\R^n\sm B_{4r}(x^*_0))\times (t_0^*-r^2,t_0^*]$
\[
P^{\a,\a}_{x_0,t_0}(x,t) \leq -\tfrac{\a}{2}9r^2 + \a r^2 < 0.
\]
Hence, $P^{2\a,2\a}_{\Phi(x_0,t_0)} < P^{\a,\a}_{0,0} \leq u$ over $(\R^n\sm B_{4r}(x^*_0))\times(t_0^*-r^2,t_0^*]$. In other words
\[
A^{2\a,2\a}_{\Phi(x_0,t_0)}(t_0^*-r^2,t_0^*) \ss N.
\]

\textbf{(3)} Finally for any $(x,t) \in N$,
\begin{align*}
P_{0,0}^{\a,\a}(x,t) &= P_{0,0}^{\a,\a}(x_0^*,t_0^*) -\a\1 x^*_0\cdot (x-x^*_0) + \tfrac{1}{2}|x-x^*_0|^2 -(t-t_0^*)\2,\\
&> m - \a(4\sqrt 6 r + 8r^2 + r^2),\\
&\geq m - 21\a r,\\
&\geq m/2.
\end{align*}
We used that $x_0 \in B_{\sqrt 6}$ and $r = m/(100\a) \in (0,1)$ in the previous estimates. Given that $u\geq P_{0,0}^{\a,\a}$ over $N$ we conclude that
\[
N \ss \{u\geq m/2\}.
\]

Now we present the second part of the proof which consists on estimating (for some $\eta>0$ to be fixed)
\[
|A^{2\a,2\a}_{V}(t_0^*-r^2,t_0^*)| \geq \eta.
\]
As already announced, this depends on the fact that $u$ satisfies
\begin{align}\label{eq:ellip}
\p_t u \geq \cM_{m\l/4,2M\L}^-(D^2u) \text{ in the viscosity sense in } \{u\in (m/4,2M)\},
\end{align}
a fact that can be checked automatically from the definition.

From now on we will assume, without loss of generality, that $u$ is semi-concave in space and time, otherwise regularize using the inf-convolution (see for instance \cite[Lemma 4.2]{MR3158522} which refers back to \cite[Section 8]{MR1118699}). Under this regularity we can define (up to a set of measure zero) the map from the contact points $(x^*,t^*) \in A^{2\a,2\a}_{V}(t_0^*-r^2,t_0^*)$ to the unique vertex $(x_0,t_0) \in V$ such that $P_{x_0,t_0}^{2\a,2\a}$ touches $u$ from below at $(x^*,t^*)$
\begin{align*}
(x^*,t^*) \mapsto (x_0,t_0) &:= (x^* + \tfrac{1}{2\a}Du(x^*,t^*), t^* - \tfrac{1}{2\a}u(x^*,t^*) - \tfrac{1}{8\a^2}|Du(x^*,t^*)|^2).
\end{align*}
Indeed, these formulas comes from the fact that at the contact point we have that
\[
Du(x^*,t^*) = -2\a(x^*-x_0), \qquad\text{ and } \qquad u(x^*,t^*) = -\tfrac{1}{4\a}|Du(x^*,t^*)|^2 + 2\a(t^*-t_0).
\]
The properties proven on the previous steps about the contact set, together with the semi-concavity of $u$, guarantee that the map is well defined and surjective.

The determinant of the Jacobian can be computed as
\begin{align*}
&\det \begin{pmatrix} 
I+\tfrac{1}{2\a}D^2 u & \tfrac{1}{2\a}D\p_t u \\
-\tfrac{1}{2\a}Du^T - \tfrac{1}{4\a^2}Du^TD^2 u & 1-\tfrac{1}{2\a}\p_t u - \tfrac{1}{4\a^2}Du\cdot D\p_t u
\end{pmatrix},\\
= \, &\det \begin{pmatrix} 
I+\tfrac{1}{2\a}D^2 u & \tfrac{1}{2\a}D\p_t u \\
0 & 1-\tfrac{1}{2\a}\p_t u
\end{pmatrix},\\
= \, &(1-\tfrac{1}{2\a}\p_t u)\det(I+\tfrac{1}{2\a}D^2u).
\end{align*}
At the contact set we have that $D^2 u \geq -2\a I$, and $\p_t u \leq 2\a$. Therefore the determinant above is non-negative.

By invoking the area formula for Lipschitz maps we get that
\[
2^{n-1}r^{n+2}|B_1| = |V| = \iint_{A^{2\a,2\a}_{V}(t_0^*-r^2,t_0^*)}(1-\tfrac{1}{2\a}\p_t u)\det(I+\tfrac{1}{2\a}D^2u).
\]
We used that $V = \Phi(V_0)$ with $\det(D\Phi) = 2^{-(n+1)}$ and $|V_0|=(4r)^nr^2|B_1|$.

Recall that over $\{u\in (m/4,2M)\}$, $u$ is driven by a uniformly elliptic operator and then we can use the equation to bound the second order derivatives from above over the contact set
\[
2\a \geq \p_t u \geq \cM_{m\l/4,M\L}(D^2 u) = \sum_{e\in \eig(D^2u)} \1\tfrac{m\l}{4}e_+ - 2M\L e_-\2 \geq -4M\L\a.
\]
By applying this estimate to the area formula we get the desired inequality
\[
2^{n-1}r^{n+2}|B_1| \leq (1+2M\L)(1+4/(m\l))^n|A^{2\a,2\a}_{V}(t_0^*-r^2,t_0^*)|.
\]
In other words, we can just take
\[
\eta := \frac{2^{n-1}r^{n+2}|B_1|}{(1+2M\L)(1+4/(m\l))^n} = \frac{2^{n-1}(m/(100\a))^{n+2}|B_1|}{(1+12\a\L)(1+4/(m\l))^n}.
\]
\end{proof}

The previous proof can also be adapted to the following version needed in the iterative procedure for the next section. Keep in mind that the equations are invariant by Lipschitz rescalings, see Remark \ref{scaling} for $\a=1$ and Remark \ref{scaling2} for the paraboloids.

\begin{corollary}\label{cor:abp}
Given $[\l,\L] \ss (0,\8)$, $\a>1$, and $m\in(0,1)$, there exist $M,\eta>0$ such that the following holds: Let $\t \in (0,3]$, $k \in \Z_{\geq0}$, and $u \in \cC(B_{2^{-k}10} \times(0,2^{-k}\t])$ be a non-negative function that satisfies
\[
\p_t u \geq u\cM^-_{\l,\L}(D^2u) \text{ in the viscosity sense in } B_{2^{-k}10} \times(0,2^{-k}\t],
\]
then
\[
A^{2^k\a,\a}_{0,0}(B_{2^{-k}10},0,2^{-k}\t) \cap \{u > 2^{-k}m\} \neq \emptyset \qquad\Rightarrow\qquad \frac{|\{u\leq 2^{-k}M\} \cap B_{2^{-k}10} \times(0,2^{-k}\t]|}{|B_{2^{-k}10} \times(0,2^{-k}\t]|} \geq \eta.
\]
\end{corollary}

\begin{proof}
Given $k\in \Z_{\geq0}$ and $u$ satisfying the hypotheses with respect to this $k$, let $u_k(x,t) := 2^ku(2^{-k}x,2^{-k}t)$, which satisfies the hypotheses of Lemma \ref{lem:abp} thanks to Remark \ref{scaling} and Remark \ref{scaling2}. Hence for $M$ and $\bar\eta$ as in Lemma \ref{lem:abp},
\begin{align*}
\frac{|\{u\leq 2^{-k}M\} \cap B_{2^{-k}10} \times(0,2^{-k}\t]|}{|B_{2^{-k}10} \times(0,2^{-k}\t]|} &= \frac{|\{u_k \leq M\} \cap B_{10} \times(0,\t]|}{|B_{10} \times(0,\t]|} \geq \frac{\bar\eta}{3|B_{10}|} =: \eta.
\end{align*}
\end{proof}

\subsection{Measure estimate at a fixed time}

The previous ABP-type lemma only applies if we know that the contact point falls on the uniformly elliptic regime. In this section we lift such requirement by implementing a selection algorithm iterated over dyadic cubes. The result of this procedure is (in the worst case) a measure estimate in space and at time $t=-1$.

%

%

%

\subsubsection{Decomposition}

Consider the dyadic decomposition of the cube
\[
Q_{1/\sqrt{n}} = (-1/(2\sqrt n),1/(2\sqrt n))^n \ss B_{10}
\]
into $2^n$ congruent and disjoint sub-cubes $Q^1_j := Q_{1/(2\sqrt n)}(x_j^1)$ where
\[
x_j^1 := \frac{1}{4\sqrt{n}}(j_{1,1},\ldots,j_{1,n}) \text{ with $j := (j_{1,1},\ldots,j_{1,n}) \in \{\pm 1\}^{1\times n}$.}
\]

The $k^{th}$ generation consists of $2^{kn}$ congruent and disjoint sub-cubes $\{Q^k_j := Q_{1/(2^k\sqrt n)}(x_j^k)\}$ generated from a similar decomposition for each cube in the previous generation. In this general case we can label the cubes, or their centers, using matrices of the form $j = (j_{\a,\b}) \in \{\pm 1\}^{k\times n}$ such that we have that each coordinate of the center $x_j^k$ is given by
\[
x_j^k \cdot e_\b := \frac{1}{2\sqrt{n}}\sum_{\a=1}^{k} 2^{-\a}j_{\a,\b}.
\]
For $k=0$ we consider by default $j=\emptyset$, $x_\emptyset^0=0$, and $Q^0_\emptyset=Q_{1/\sqrt n}$.

We say that $j \in \{\pm1\}^{k\times n}$ is a direct descendant of  $j' \in \{\pm1\}^{(k-1)\times n}$ if the cube $Q_j^k$ is one of the $2^n$ sub-cubes obtained from the decomposition of the cube $Q_{j'}^{k-1}$. In other words, $j'$ consists of the first $(k-1)$ rows of $j$ (i.e. $j' = (j_{\a,\b})_{\a\leq (k-1)}$). Reciprocally, we may just say that $j'$ is the progenitor of $j$. An ancestor of $j$ is in this way any sub-matrix $j' = (j_{\a,\b})_{\a\leq l} \in \{\pm 1\}^{l\times n}$ for $l\in[0,k)$ integer.

Let $\t\in (1/2,1]$. For $k\geq 0$, we consider the time intervals of the form $(t_k,s_k]$ with
\[
t_k := 2^{-k}-1, \qquad s_k := \min(2^{-(k-2)}-1,\t).
\]

See Figure \ref{fig1} for an illustration of the dyadic decomposition and the time intervals that will be considered in our constructions.

\begin{figure}[t]
\centering
\includegraphics[width=10cm]{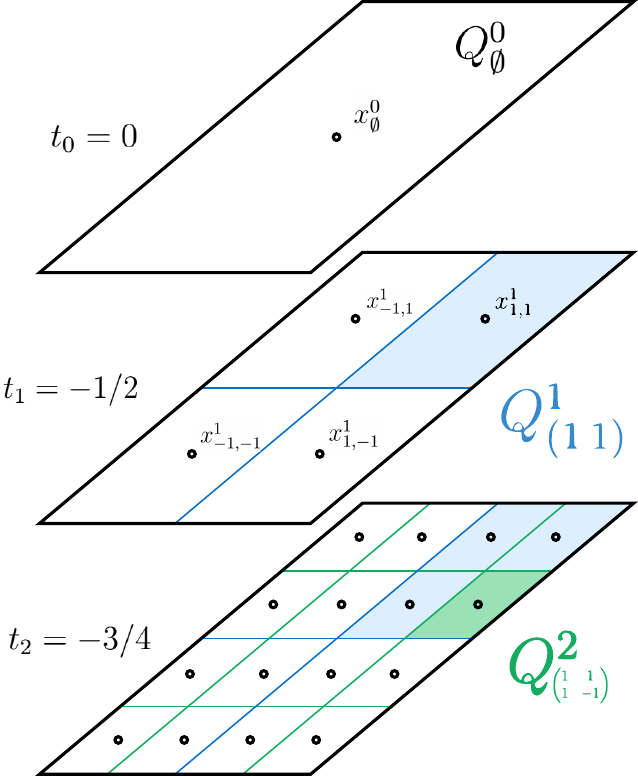}
\caption{Dyadic decomposition.}
\label{fig1}
\end{figure}


The selection algorithm is described in the following statement. In this lemma we finally fix the parameters $\a$ and $m$, therefore also the constants $M$ and $\eta$ appearing in the previous lemma and corollary.

\begin{lemma}[Dyadic decomposition]\label{lem:dec}
Given $[\l,\L]\ss(0,\8)$ there exist $\a>1$ and $m\in(0,1)$ such that the following holds: Let $\t \in(1/2,1]$ and $u \in \cC(B_{10}\times[-1,\t])$ be a non-negative function that satisfies
\[
\p_t u \geq u\cM_{\l,\L}^-(D^2u) + \l|Du|^2 \text{ in the viscosity sense in } B_{10}\times(-1,\t],
\]
such that $A_{0,0}^{\a,\a}(B_{10},0,\t) \neq 0$. Starting with $k=0$, we define the sets of indices
\begin{align*}
\mathcal G_k := \{j\in\{\pm1\}^{k\times n} \ | \ &(j_{a,\b})_{\a\leq l} \notin \mathcal G_l \text{ for any integer } l \in [0,k),\\
&A_{x_j^k,t_k}^{2^k\a,\a}(B_{2^{-k}10}(x_j^k),t_k,s_k) \cap\{u>2^{-k}m\}\neq \emptyset\}.
\end{align*}
Then
\[
Q_{1/\sqrt n} \sm \bigcup_{k\geq 0} \bigcup_{j\in \mathcal G_k} Q_j^k \ss \{u(\cdot,-1)=0\}.
\]
\end{lemma}

The first condition in the construction of $\mathcal G_k$ says that no ancestor of $j$ has been previously chosen in $\mathcal G_l$ for $l\in[0,k)$ integer. The second condition will be used to apply Corollary \ref{cor:abp} over the domain $B_{2^{-k}10}(x_j^k)\times(t_k,s_k]$.

\begin{proof}
Let us notice from the very beginning that for any $k\geq 0$ and $j \in \{\pm1\}^{k\times n}$, the positive part of the paraboloid $P_{x_j^k,t_k}^{2^k\a,\a}$ is supported inside $B_{2^{-k}\sqrt 6}(x_j^k) \ss B_4\ss B_{10}$ during the time interval $(t_k,s_k]$. Indeed, for $t\in (t_k,s_k]$
\[
\overline{\{ P_{x_j^k,t_k}^{2^k\a,\a}(\cdot,t)>0\}} \ss \overline{\{ P_{x_j^k,t_k}^{2^k\a,\a}(\cdot,s_k)>0\}} = \overline{B_{\sqrt{2^{-(k-1)}(s_k-t_k)}}(x_j^k)} \ss \overline{B_{2^{-k}\sqrt 6}(x_j^k)}.
\]

Let $j \in \{\pm 1\}^{k\times n}$ such that for any integer $l\in[0,k)$ it holds that $(j_{\a,\b})_{\a\leq l}\notin \mathcal G_l$. Let us show then that
\begin{align}\label{eq:ind}
A_{x_j^k,t_k}^{2^k\a,\a}(B_{2^{-k}10}(x_j^k),t_k,s_k) \neq \emptyset.
\end{align}

Once this gets established we notice that for any $x\in Q_{1/\sqrt n}\sm \bigcup_{k\geq 1} \bigcup_{j\in \mathcal G_k} Q_j^k$ there exists a sequence $j^{(k)} \in \{\pm 1\}^{k\times n}\sm \mathcal G_k$ such that $j^{(k-1)}$ is the progenitor of $j^{(k)}$ and $x_{j^{(k)}}^k \to x$ as $k\to \8$. By \eqref{eq:ind} and the construction of $\mathcal G_k$ we necessarily have that
\[
\emptyset \neq A_{x_{j^{(k)}}^k,t_k}^{2^k\a,\a}(B_{2^{-k}10}(x_{j^{(k)}}^k),t_k,s_k) \ss \{u\leq 2^{-k}m\},
\]
which means that
\[
\{u\leq 2^{-k}m\}\cap B_{2^{-k}10}(x_{j^{(k)}}^k)\times(t_k,s_k] \neq \emptyset.
\]
Hence by the continuity of $u$ and the fact that the cylinders accumulate towards $(x,-1)$, we deduce that $u(x,-1)=0$, the desired conclusion of the lemma.

The identity \eqref{eq:ind} is certainly true for $k=0$ by the hypothesis $A_{0,0}^{\a,\a}(B_{10},0,\t) \neq 0$. Assume then inductively that for the progenitor $j'\in \{\pm 1\}^{(k-1)\times n}$ of $j\in \{\pm 1\}^{k\times n}$ it holds that
\[
A_{x_{j'}^{k-1},t_k}^{2^{k-1}\a,\a}(B_{2^{-(k-1)}10}(x_{j'}^{k-1}),t_{k-1},s_{k-1}) \neq \emptyset.
\]
Because $j'\notin \mathcal G_{k-1}$ we necessarily have that
\[
(x^*,t^*) \in A_{x_{j'}^{k-1},t_{k-1}}^{2^{k-1}\a,\a}(B_{2^{-(k-1)}10}(x_{j'}^{k-1}),t_{k-1},s_{k-1}) \cap \{u\leq 2^{-(k-1)}m\} \text{ exists}.
\]
We will see now that by fixing $\a$ large and $m$ small, $(x^*,t^*)$ necessarily localizes close to $(x_{j'}^{k-1},t_{k-1})$.

By using $\varphi := P_{x_{j'}^{k-1},t_{k-1}}^{2^{k-1}\a,\a}$ as a test function for the equation at the contact point $(x^*,t^*) \in \{u\leq 2^{-(k-1)}m\}$ we get that
\begin{align*}
\a &\geq \varphi(x^*,t^*)\cM^-_{\l,\L}(-2^{k-1}\a I) + \l 2^{2(k-1)}\a^2|x^*-x_{j'}^{k-1}|^2,\\
&\geq - nm\L\a + \l 2^{2(k-1)}\a^2|x^*-x_{j'}^{k-1}|^2,
\end{align*}
then
\[
\sqrt{\frac{1+nm\L}{4\l\a}} \geq 2^k|x^*-x_{j'}^{k-1}|.
\]
By finally fixing
\[
\a := 100\max(1,1/\l), \qquad  m := \min(1,1/(n\L)),
\]
we guarantee that
\[
x^*\in B_{2^{-k}/10}(x_{j'}^{k-1}).
\]
Because $\a\geq 2m$ we get that for $(x,t) \in B_{2^{-k}/10}(x_{j'}^{k-1})\times (s_k,\8)$
\begin{align*}
P_{x_{j'}^{k-1},t_{k-1}}^{2^{k-1}\a,\a}(x,t) &\geq -2^{k-2}\a(2^{-k}/10)^2 + \a \min(2^{-(k-1)},\t+1-2^{-(k-1)}),\\
&\geq \a 2^{-k}(2-1/400), \qquad\qquad (\t\geq 1/2)\\
&> m2^{-(k-1)}.
\end{align*}
This means that $t^* \in (t_{k-1},s_k]$.

As a final step let us check now that
\begin{align}\label{eq:ind2}
P_{x_j^k,t_k}^{2^k\a,\a}\geq 2^{-(k-1)}m \text{ in }B_{2^{-k}/10}(x_{j'}^{k-1})\times(t_{k-1},s_k].
\end{align}
Together with the localization of $(x^*,t^*)$ this would imply that $P_{x_j^k,t_k}^{2^k\a,\a}$ necessarily reaches $u$ in the time interval $(t_k,s_k]$. Recalling that for any $t\in (t_k,s_k]$ it holds that $\overline{\{ P_{x_j^k,t_k}^{2^k\a,\a}(\cdot,t)>0\}} \ss B_{2^{-k}10}(x_j^k)$, we would get from this step that \eqref{eq:ind} is true and conclude the proof.

Given $(x,t) \in B_{2^{-k}/10}(x_{j'}^{k-1})\times(t_{k-1},s_k]$
\begin{align*}
P_{x_j^k,t_k}^{2^k\a,\a}(x,t) &\geq -2^{k-1}\a(|x_{j'}^{k-1} - x_j^k| +2^{-k}/10)^2 + \a 2^{-k},\\
&= -2^{k-1}\a(2^{-(k-1)}/4 +2^{-(k-1)}/20)^2 + \a 2^{-(k-1)}/2,\\
&\geq 2^{-(k-1)}\a/10.
\end{align*}
Using that $\a>10m$ we settle the desired lower bound.
\end{proof}

\subsubsection{Fixed time measure estimate}

Let $u$ be as in Lemma \ref{lem:ib2}. Whenever we have that $j \in \mathcal G_k$, Corollary \ref{cor:abp} can be applied to $u$ over the domain $B_{2^{-k}10}(x_j^k)\times(t_k,s_k]$, as a result we get the following lower bound for the density of the set $\{u\leq M\}\supseteq \{u\leq 2^{-k}M\}$ in the cylinder $B_{2^{-k}10}(x_j^k)\times(t_k,s_k]$
\[
\frac{|\{u\leq M\}\cap B_{2^{-k}10}(x_j^k)\times(t_k,s_k]|}{|B_{2^{-k}10}(x_j^k)\times(t_k,s_k]|} \geq \eta.
\]
The disadvantage of these configurations is that as $k\to\8$ these cylinders converge to $t=-1$, effectively recovering an estimate in space and not in space-time as expected in Lemma \ref{lem:ib2}. However we will see in the following section that these estimates can be integrated in time to recover the desired bound. To give a rigorous proof of this estimate at a fixed time, we consider a projection of the sets in space. In order to do this we invoke the following geometric lemma, see Figure \ref{fig3}.

\begin{figure}[t]
\centering
\includegraphics[width=14cm]{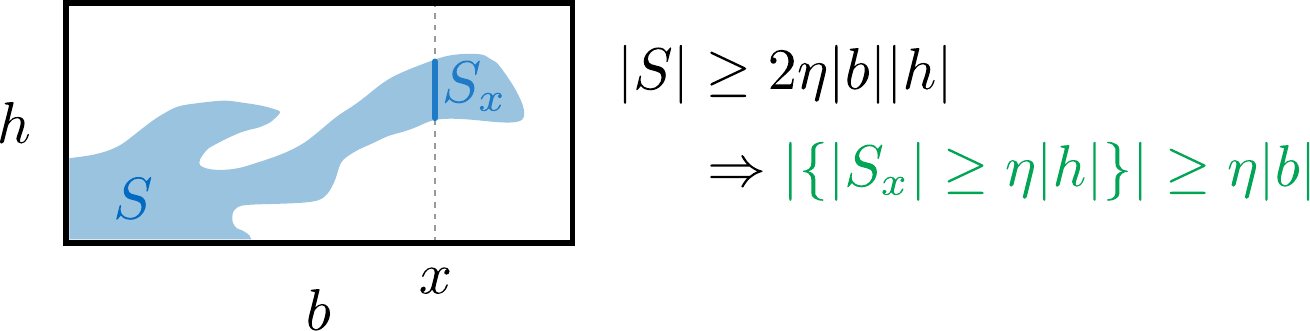}
\caption{An elementary geometric fact.}
\label{fig3}
\end{figure}

\begin{lemma}\label{lem:geo}
Let $\eta>0$, $S \ss b\times h \ss \R^n\times \R$ with positive measure, and for $x\in b$ define the fiber of $S$ over $x$ as $S_x := \{t\in h\ | \ (x,t)\in S\}$. Then
\[
\frac{|S|}{|b\times h|} \geq 2\eta \qquad\Rightarrow\qquad \frac{|\{x \in b \ | \ |S_x|/|h|\geq \eta\}|}{|b|} \geq \eta.
\] 
\end{lemma}

\begin{proof}
By sub-additivity
\[
2\eta|b||h| \leq |S| \leq |\{x \in b \ | \ |S_x|\geq \eta|h|\}||h| + |\{(x,t) \in b\times h \ | \ |S_x|< \eta|h|\}|.
\]
By Fubini
\[
|\{(x,t) \in b\times h \ | \ |S_x|< \eta|h|\}| \leq \eta|b||h|.
\]
Hence
\[
|\{x \in b \ | \ |S_x|\geq \eta|h|\}||h| \geq 2\eta|b||h| - \eta|b||h| = \eta|b||h|.
\]
\end{proof}

As we plan to apply this to the set $S = \{u\leq M\}$ it is convenient to consider the density of $\{u\leq M\}$ for each $x \in B_{10}$ over a sub-interval $(t,s]\ss(-1,1]$
\[
\theta_M(x,t,s) := \frac{|\{u(x,\cdot)\leq M\}\cap (t,s]|}{s-t}.
\]
As well the maximal version for $t \in [-1,1]$
\[
\Theta_M(x,t) := \sup_{s\in(t,1]}\theta_M(x,t,s).
\]

\begin{corollary}
Let $\eta,M,R>0$, $(t,s]\ss\R$ and $u \in \mathcal C(B_R\times(t,s])$. Then
\[
\frac{|\{u\leq M\}\cap B_{R}\times(t,s]|}{|B_R\times(t,s]|} \geq \eta \qquad\Rightarrow\qquad 
\frac{|\{\theta_M(\cdot,t,s)\geq \eta/2\}\cap B_R|}{|B_R|}\geq \frac{\eta}{2}.
\]
\end{corollary}

\begin{proof}
Indeed, if we take $S:=\{u\leq M\}$ and $b\times h = B_R\times(t,s]$, then $|S_x|/|h| = \theta_M(x,t,s)$.
\end{proof}

From the conclusion of Corollary \ref{cor:abp},
\[
\frac{|\{u\leq M\}\cap B_{2^{-k}10}(x_j^k)\times(t_k,s_k]|}{|B_{2^{-k}10}(x_j^k)\times(t_k,s_k]|} \geq \eta,
\]
and using that the length of $(-1,s_k]$ is at most eight times the length of $(t_k,s_k]$, so that for $x\in B_{10}$
\[
\Theta_M(x,-1) \geq \theta_M(x,-1,t_k) \geq \frac{1}{8} \theta_M(x,s_k,t_k),
\]
we deduce that
\[
\frac{|\{\Theta_M(\cdot,-1)\geq \eta/16\}\cap B_{2^{-k}10}(x_j^k)|}{|B_{2^{-k}10}(x_j^k)|}\geq \frac{\eta}{2}.
\]

The following result is a corollary of the dyadic decomposition (Lemma \ref{lem:dec}), the ABP corollary (Corollary \ref{cor:abp}) and the previous geometric observations.

\begin{lemma}[Fixed time measure estimate]\label{cor:fxtm}
Given $[\l,\L] \ss (0,\8)$, let $\a=100\max(1,1/\l)$ be as in Lemma \ref{lem:dec} and $M=6\a$ as in Corollary \ref{cor:abp}. There exists $\eta\in (0,1)$ such that if $\t\in(1/2,1]$, $u \in \mathcal C(B_{10}\times[-1,\t])$ is a non-negative function that satisfies
\[
\p_t u \geq u\cM^-_{\l,\L}(D^2u) + \l|Du|^2 \text{ in the viscosity sense in } B_{10}\times(-1,\t],
\]
then $A^{\a,\a}_{0,0}(B_{10},0,\t)\neq \emptyset$ implies that at least one of the following alternatives hold
\[
\frac{|\{u(\cdot,-1)=0\} \cap Q_{1/\sqrt n}|}{|Q_{1/\sqrt n}|} \geq 1/2 \qquad\text{or}\qquad |\{\Theta_M(\cdot,-1)\geq \eta\}|\geq \eta.
\]
\end{lemma}

\begin{proof}
Let $\bar\eta$ be as in Corollary \ref{cor:abp} and $\mathcal G_k$ be as in Lemma \ref{lem:dec}.

If the first alternative is false, that is to say $|\{u(\cdot,-1)=0\} \cap Q_{1/\sqrt n}| < 1/2|Q_{1/\sqrt n}|$, we get, thanks to the Lemma \ref{lem:dec}, that
\[
\left| \bigcup_{k\geq 0} \bigcup_{j\in \mathcal G_k}  B_{2^{-k}10}(x_j^k)\right| \geq \left| \bigcup_{k\geq 0} \bigcup_{j\in \mathcal G_k}  Q_{2^{-k}}(x_j^k)\right| \geq \frac{1}{2}|Q_{1/\sqrt n}|.
\]

By Corollary \ref{cor:abp} and the geometric observations in this section we get that for each $j \in \mathcal G_k$
\[
\frac{|\{\Theta_M(\cdot,-1) > \bar\eta/16\}\cap B_{2^{-k}10}(x_j^k)|}{|B_{2^{-k}10}(x_j^k)|} \geq \frac{\bar\eta}{2}.
\]

By Vitali Covering Lemma we can then extract a subset $\mathcal G \ss \bigcup_{k\geq 0}\mathcal G_j$ such that the balls $\{B_{2^{-k}10}(x_j^k)\}_{j\in \mathcal G}$ form a disjoint set and
\[
\sum_{j\in \mathcal G}\left|B_{2^{-k}10}(x_j^k) \right| \geq 5^{-n}\left| \bigcup_{k\geq 0} \bigcup_{j\in \mathcal G_k}  B_{2^{-k}10}(x_j^k)\right|.
\]

By finally taking $\eta := (5^{-n}|Q_{1/\sqrt n}|/100)\bar\eta$ we get that
\begin{align*}
|\{\Theta_M(\cdot,-1)\geq \eta\}| &\geq \sum_{j\in \mathcal G}\left|\{\Theta_M(\cdot,-1)\geq \bar\eta/16\}\cap B_{2^{-k}10}(x_j^k) \right|,\\
&\geq \frac{\bar\eta}{2}\sum_{j\in \mathcal G}\left|B_{2^{-k}10}(x_j^k) \right|,\\
&\geq \frac{2\eta}{|Q_{1/\sqrt n}|} \left| \bigcup_{k\geq 0} \bigcup_{j\in \mathcal G_k}  B_{2^{-k}10}(x_j^k)\right|,\\
&\geq \eta.
\end{align*}
The desired conclusion of the lemma.
\end{proof}

\subsection{Integration in time} As a final step to get the measure estimate Lemma \ref{lem:ib2}, we need to apply Lemma \ref{cor:fxtm} in a whole interval of times and obtain a set of positive measure in \emph{space and time} where the super-solution is bounded.

\begin{lemma}\label{cor:ib3}
Given $[\l,\L] \ss (0,\8)$, let $\a=100\max(1,1/\l)$ be as in Lemma \ref{lem:dec} and $M=6\a$ as in Corollary \ref{cor:abp}. There exists $\eta\in(0,1)$ such that the following holds: Let $u \in \mathcal C(B_{10}\times(-1,1])$ be a non-negative function that satisfies
\[
\p_t u \geq u\cM^-_{\l,\L}(D^2u) + \l|Du|^2 \text{ in the viscosity sense in } B_{10}\times(-1,1].
\]
If $A^{\a,\a}_{0,\t}(B_{10},\t,1) \neq \emptyset$ for each $\t\in(0,1/2]$, then
\[
|\{u\leq M\}\cap B_{10}\times(-1,1]| \geq \eta.
\]
\end{lemma}

\begin{proof}
Let $\bar\eta$ be as in Lemma \ref{cor:fxtm}. By applying Lemma \ref{cor:fxtm} to $u(\cdot,\cdot+\t)$ we get that one of the following alternatives hold for each $\t\in (0,1/2]$
\[
\frac{|\{u(\cdot,-1+\t)=0\}\cap Q_{1/\sqrt n}|}{|Q_{1/\sqrt n}|} \geq \frac{1}{2} \qquad \text{or} \qquad |\{\Theta_M(\cdot,-1+\t) \geq \bar\eta\} \cap B_{10}| \geq \bar\eta.
\]

If the set of $\t\in (0,1/2]$ where the first alternative holds has length at least $1/4$ we conclude by Fubini: The function $u$ vanishes in a set of $(n+1)$-dimensional measure at least $\eta_1:= |Q_{1/\sqrt n}|/8$ in $Q_{1/\sqrt n}\times(-1,-1/2]\ss B_{10}\times(-1,1]$. So let us assume instead that the set of $\t\in (0,1/2]$ where the second alternative holds has measure at least $1/4$. By Fubini,
\[
|\{\Theta_M \geq \bar\eta\}\cap B_{10}\times(-1,-1/2]|\geq \frac{\bar\eta}{4}.
\]

By applying Lemma \ref{lem:geo} with $S := \{\Theta_M \geq \bar\eta\}$ in $b\times h := B_{10}\times(-1,-1/2]$ we get that for
\[
G:=\{x\in B_{10} \ | \ |S_x| \geq \bar\eta/(4|B_{10}|)\} \qquad\text{ and }\qquad S_x = \{t\in(-1,-1/2] \ | \ \Theta_M(x,t) \geq \bar\eta\},
\]
it holds that
\begin{align}\label{eq:ppp}
|G| \geq \frac{\bar\eta}{4|B_{10}|}.
\end{align}

Lastly, let us show that for each $x_0 \in G$
\[
|\{t\in (-1,1] \ | \ u(x_0,t)\leq M\}|\geq \frac{\bar\eta^2}{32|B_{10}|}.
\]
Together with the estimate \eqref{eq:ppp} it will settle the proof of the lemma with $\eta := \min(\eta_1,\eta_2)$ and $\eta_2:= \frac{\bar\eta^3}{128|B_{10}|^2}$.

Let $x_0 \in G$. By the definition of $\Theta_M$ there exists $\ell(t) \in (0,1-t]$ for each $t\in S_{x_0}$ such that
\[
\theta_M(x_0,t,s(t)) = \frac{|\{u(x_0,\cdot)\leq M\}\cap (t,t+\ell(t)]|}{\ell(t)} \geq \frac{\bar\eta}{2}.
\]

By applying Vitali Covering Lemma to the covering $\{(t-\ell(t),t+\ell(t))\}_{t\in S_{x_0}}$ of the fiber $S_{x_0}$ we get that there exists a countable set $\mathcal T \ss S_{x_0}$ such that $\{(t-\ell(t),t+\ell(t))\}_{t\in \mathcal T}$ is disjoint and
\[
\sum_{t\in \mathcal T} \ell(t) \geq \frac{1}{4}|S_{x_0}|.
\] 
Hence
\begin{align*}
\frac{\bar\eta}{4|B_{10}|} &\leq |S_{x_0}|,\\
&\leq 4\sum_{t\in \mathcal T} \ell(t),\\
&\leq \frac{8}{\bar\eta} \sum_{t\in\mathcal T}|\{u(x_0,\cdot) \leq M\}\cap (t,t+\ell(t)]|,\\
&\leq \frac{8}{\bar\eta} |\{u(x_0,\cdot)\leq M\}\cap (-1,1]|.
\end{align*}
the announced estimate with which we conclude the proof.
\end{proof}

\subsection{Proof of Lemma \ref{lem:ib2} and Lemma \ref{lem:ib}}

\begin{proof}[Proof of Lemma \ref{lem:ib2}]
Recall that $\a := 100\max(1,1/\l)$ was fixed in Lemma \ref{lem:dec} and from there we choose $M=6\a$ and $\eta$ from Lemma \ref{cor:ib3}. All we need to check is that $u(0,1)\leq 1$ implies $A_{0,\t}^{\a,\a}(B_{10},\t,1)\neq \emptyset$ for every $\t \in (0,1/2]$ in order to apply Lemma \ref{cor:ib3}.

Indeed, recall first that for every $t \in (\t,1]$ we observe that $\overline{\{ P_{0,\t}^{\a,\a}(\cdot,t)>0\}} \ss \overline{B_{\sqrt 2}}\ss B_{10}$. On the other hand, the paraboloid starts from zero at time $\t$ and at time $t=1$ we get that
\[
P_{0,\t}^{\a,\a}(0,1) = \a(1-\t) \geq \a/2 \geq 1.
\]
Then, definitely $P_{0,\t}^{\a,\a}$ reaches $u$ at some intermediate time inside $B_{10}$.
\end{proof}

\begin{proof}[Proof of Lemma \ref{lem:ib}]
Let $u\in \mathcal C(B_1\times(-1,0])$ satisfies the hypotheses from Lemma \ref{lem:ib} with respect to the parameters $[\l,\L]\ss(0,\8)$. Let $M=6\a=600\max(1,1/\l)$ and $\bar\eta>0$ the constants from Lemma \ref{lem:ib2} with respect to the same ellipticity parameters. Assume by contradiction that for $\theta:= 1/(2M)$ we have that $u(x_0,t_0) < \theta$ for some $(x_0,t_0) \in B_{1/2}\times(-1/2,0]$.

Consider the Lipschitz rescaling
\[
v(x,t) := 2M u(x/(2M)+x_0,(t-1)/2M+t_0)
\]
such that $v$ restricted to $B_{10}\times(-1,1]$ satisfies the super-solution equation from Lemma \ref{lem:ib2} (Remark \ref{scaling}).

Given that $u(x_0,t_0)<\theta$ is equivalent to say that $v(0,1) \leq 1$ we get from Lemma \ref{lem:ib2} that
\begin{align*}
\frac{|\{u \leq 1/2\}\cap B_1\times(-1,0]|}{|B_1\times(-1,0]|} &\geq \frac{5^n}{M^{n+1}}\frac{|\{u \leq 1/2\}\cap B_{5/M}(x_0) \times(t_0-1/M,t_0]|}{|B_{5/M}(x_0) \times(t_0-1/M,t_0]|},\\
&= \frac{5^n}{M^{n+1}}\frac{|\{v\leq M\}\cap B_{10}\times(-1,1]|}{|B_{10}\times(-1,1]|}\\
&\geq \frac{5^n\bar\eta}{M^{n+1}|B_{10}\times(-1,1]|}.
\end{align*}
Hence, if we finally choose $\eta$ to be the right-hand side above, we contradict that the set where $u$ is instead larger than $1/2$ is less than $1-\eta$, which settles the proof.
\end{proof}

\section{Improvement from above}\label{sec:ia}

As a last step towards the proof of Theorem \ref{thm} we need to establish the improvement of the upper bound provided a weak control in measure for a sub-solution.

\begin{proof}[Proof of Lemma \ref{lem:ia}]
The truncation $w := \max(1/2, u)$ is a viscosity sub-solution to a semi-linear parabolic equation
\[
\partial_tw \leq \cM_{\l/2,\L}^+(D^2 w) + \Lambda|Dw|^2 \;\;\mbox{ in }\;\;B_1\times(-1,0].
\]
This fact can be directly checked from the definition of viscosity sub-solutions.

Then we consider $v := e^{Aw}$ such that, again from the definition of sub-solution (by applying the same transformation to the test functions) we get that the following inequality holds in the viscosity sense
\[
\partial_tv\leq \cM_{\l/2,\L}^+\left(D^2v - \dfrac{Dv\otimes Dv}{v}\right) + \dfrac{\Lambda|Dv|^2}{Av}\;\;\mbox{ in }\;\;B_1\times(-1,0].
\]
By taking $A$ sufficiently large in terms of the ellipticity constants, we get to cancel the gradient terms in order to recover the caloric equation 
\[
\partial_tv \leq \cM_{\l/2,\L}^+(D^2v)\;\;\mbox{ in }\;\;B_1\times(-1,0].
\]
The result now follows by applying the weak Harnack's inequality to $(e^A-v)$.
\end{proof}

\section{Proof of the main theorem}\label{sec:pf}

By combining Lemma \ref{lem:ib} and Lemma \ref{lem:ia}, we are able to prove a diminish of oscillation property, and finish the proof of Theorem \ref{thm}.

\begin{proof}[Proof of Theorem \ref{thm}]
Given $[\l,\L]\ss(0,\8)$, let $\eta\in(0,1)$ be the fraction from the improvement from below (Lemma \ref{lem:ib}), $\theta \in (0,1)$ the smallest constant between those appearing in Lemma \ref{lem:ib} and Lemma \ref{lem:ia} (for the previous $\eta$), and $\b:= |\ln(1-\theta)/\ln 2|$.

Given $B_{r_0}(x_0)\times (t_0-r_0,t_0]\ss D$ with $r_0\in(0,1)$, let
\[
M := \max(1,\|u\|_{\mathcal C^0(B_{r_0/2}(x_0)\times (t_0-r_0/2,t_0])}).
\]
Our goal is to show that
\[
\sup_{\r\in (0,\min(r_0/2,M^{-1}))}\r^{-\b}\osc_{B_\r(x_0)\times (t_0-\r^{2-\b},t_0]} u < \8,
\]
from where we get that $u\in \mathcal C^{\a}_{loc}(D)$ for $\a=\b/(2-\b)$.

Consider the following rescaling for $r_1:=\min(r_0/2,M^{-1})\in(0,1)$
\[
v(x,t) := r_1 u(r_1 x+x_0,r_1^{3}t+t_0).
\]
This function is defined in $B_1\times(-1,0]$ and takes values between zero and one. It also satisfies the same inequalities in the viscosity sense as $u$ (Remark \ref{scaling} with exponents $\a=-1$ and $\b=3$). Our goal for this function is then to show that
\[
\sup_{\r\in (0,1)}\r^{-\b}\osc_{B_\r\times (-\r^{2-\b},0]} v < \8.
\]
We will actually show by induction that the the left-hand side gets bounded by $(1-\theta)^{-1}$. In order to do this we only need to consider the cases $\r=2^{-k}$ (for $k\in \Z_{\geq0}$) and show instead that
\begin{align}\label{eq:induction}
\osc_{B_{2^{-k}}\times (-2^{-(2-\b)k},0]} v \leq (1-\theta)^k =2^{-\b k}.
\end{align}
For $\r\in (2^{-(k+1)},2^{-k}]$ it then follows that
\[
\osc_{B_\r\times (-\r^{2-\b},0]} v \leq \osc_{B_{2^{-k}}\times (-2^{-(2-\b)k},0]} v \leq (1-\theta)^{-1}(1-\theta)^{k+1} = (1-\theta)^{-1}2^{-\b(k+1)} \leq (1-\theta)^{-1}\r^\b.
\]

Clearly we have that \eqref{eq:induction} holds for $k=0$. Assume then the hypothesis for some arbitrary $k\geq 0$ and consider the rescaling
\[
w(x,t) = 2^{\b k}v(2^{-k}x,2^{-(2-\b)k}t).
\]
It is non-negative and satisfies the same inequalities in the viscosity sense as $v$ over $B_1\times(-1,0]$ (Remark \ref{scaling}). Moreover, thanks to the inductive hypothesis
\[
\sup_{B_1\times(-1,0]} w \leq 2^{\b k}\osc_{B_{2^{-k}}\times (-2^{-(2-\b)k},0]} v \leq 2^{\b k}(1-\theta)^k = 1.
\] 

For the fraction $\eta\in(0,1)$, chosen from Lemma \ref{lem:ib}, we have the following alternatives:
\[
\frac{|\{w > 1/2\}\cap B_{1}\times\left[-1,0 \right)|}{|B_{1}\times\left[-1,0 \right)|} \geq (1-\eta) \qquad\text{ or }\qquad  \frac{|\{w \leq 1/2\}\cap B_{1}\times\left[-1,0 \right)|}{|B_{1}\times\left[-1,0 \right)|} \geq \eta.
\]
Hence either Lemma \ref{lem:ib} or \ref{lem:ia} indicate that
\[
\osc_{B_{1/2}\times(-2^{-(2-\b)},0]}w \leq \osc_{B_{1/2}\times(-1/2,0]}w \leq 1-\theta.
\]
For $v$ this is the inductive step required to conclude the proof.
\end{proof}

\section{Conclusion and further directions}\label{sec:fin}

In this article we have been able to establish the Hölder regularity for viscosity solutions to porous media type equations following the Krylov-Safonov theory. The degeneracy feature in our equation roughly says that the solution either falls in a uniformly elliptic regime (this is the role of the set $\{u>m\}$ in the ABP Lemma \ref{lem:abp}) or follows an eikonal equation evolution which controls how the support spreads (this was crucial in the decomposition Lemma \ref{lem:dec}).

As we can see in the proof of the main theorem (Theorem \ref{thm}), the estimate on the Hölder semi-norm depends in a non-linear fashion on the oscillation of $u$. This is a feature of the scaling of our equation. Linear estimates are expected for the Lipschitz semi-norm of the solution, because the Lipschitz scaling preserves the ellipticity constants of the equation. In order to establish these type of results we plan to pursue regularity estimates over the free boundary. See for instance \cite{Caffarelli_Friedman-1980} for a related approach in the case of divergence type equations.

Keeping in mind the interest established in \cite{Brandle_Vazquez-2005} on equations of the form $\p_tu = a(u)\D u+|Du|^2$ with $a$ sublinear as $u\to0^+$, we considered extending our result to $\p_tu = u^\a\D u+|Du|^\b$. The scaling of this equation seems quite restrictive. For instance, if our goal is to show a diminish of oscillation of order $\gamma$ in space we find the constrain $(\a+1)\gamma-2=\beta\gamma-\b$ on $\gamma$, unless $\a+1=\b=2$ which is our original case. Otherwise, we need to find an strategy to show directly a $\gamma$-Hölder modulus of continuity (in space) with $\gamma=(2-\beta)/(\a+1-\b)$, which is not necessarily a small exponent. For $\a>1$ (the case we can relate with \cite{Brandle_Vazquez-2005}) we get that $\gamma\to0^+$ if $\b\to2^-$, perhaps there is an opportunity to show a result in such range by a perturbative approach.

The two phase PME is also a well understood problem from the variational point of view since the eighties \cite{MR683353,MR654859,DiBenedetto-1982,DiBenedetto-1983,MR696738}. However, as far as we know, the viscosity solution approach for this particular type of two-phase free boundary problem has not been developed. Expanding the variational equation $\p_tu=\div(|u|Du)$, we see that the non-variational form should say instead the following: Each phase $u_\pm = \max(\pm u,0)$ satisfies the un-signed PME outside of the free boundary points connecting the two phases
\[
\p_t u = u\D u + |Du|^2 \text{ in } (\spt u_\pm \sm \spt u_\mp) \cap \W\times(t,s]
\]
and at the two-phase free boundary points we get
\[
\frac{\p_t u}{|Du_+|} = -\frac{\p_t u}{|Du_-|} = |Du_+|-|Du_-| \text{ in } \spt u_+ \cap \spt u_- \cap \W\times(t,s]
\]
In other words we can just say that
\[
\p_t u = |u|\D u + |Du_+|^2 - |Du_-|^2 \text{ in } \W\times(t,s].
\]
Besides the challenges that the well-posedness may present, we believe that our treatment for the regularity theory requires some substantial modifications. For instance, the ABP Lemma \ref{lem:abp} uses as test functions paraboloids that start growing from the zero level set. We could consider paraboloids that start from smaller level sets, however it is not clear how to use the eikonal equation to prevent the horizontal spread of the contact set whenever the contact happens at $\{u \in (-m,0]\}$. 

An interesting problem arises from models with non-local interactions. Let us recall that the continuity equation $\p_t u - \div(uDp) =0$ models the evolution of the density $u$ driven by the pressure $p$. The case $p=-(-\D)^{-\s}u$ with $\s\in(0,1)$ gives the fractional PME proposed by Caffarelli and Vázquez in \cite{Caffarelli_Vazquez-2011}. Meanwhile the regularity of the solution was established by the de Giorgi method in \cite{MR3082241}, the regularity of the free boundary remains largely open. The difficulty resides on the presence of the non-local drift term $Dp\cdot Du$ which prevents the comparison principle. Preliminary computations show that our technique could be extended to the equation
\[
\p_tu = -u(-\D)^{1-\s}u + |Du|^2
\]
where the dangerous presence of the non-local drift is replaced by the classical and local one, with the trade off of destroying the variational structure of the problem.

The regularity of the free boundary for evolution problems has a well developed non-variational approach, the book by Caffarelli and Salsa \cite{MR2145284} is our recommended source. For the PME a very elegant improvement of flatness strategy was recently implemented by Kienzler, Koch and Vázquez in \cite{kienzler2018flatness}. The regularity of a broader family of PME type equations may now open the possibility of extending this higher regularity for the free boundary to the corresponding non-variational class.

\bibliographystyle{plain}
\bibliography{mybibliography}

\end{document}